\newtheorem*{theoremaux}{Theorem \theoremauxnum}
\gdef\theoremauxnum{1}
\newtheorem{lemma}{\bf Lemma}[section]
\newtheorem{theorem}{\bf Theorem}[section]
\newtheorem{proposition}[lemma]{\bf Proposition}
\newtheorem{corollary}[lemma]{\bf Corollary}
\newtheorem{definition}{\bf Definition}[section]
\journal{~}
\begin{document}

\begin{frontmatter}


\author{Angsuman Das}
\ead{angsuman.maths@presiuniv.ac.in}
\ead{angsumandas054@gmail.com}
\address{Department of Mathematics,\\ Presidency University, Kolkata, India} 

\author{S. Morteza Mirafzal$^*$}
\ead{mirafzal.m@lu.ac.ir}
\ead{smortezamirafzal@yahoo.com}
\address{Department of Mathematics,\\
		Lorestan University, Khorramabad, Iran} 
\cortext[cor1]{Corresponding author}

\title{Automorphism group of a family of distance regular graphs which are not distance transitive}

\begin{abstract}
	Let $G_n=\mathbb{Z}_n\times \mathbb{Z}_n$ for $n\geq 4$ and $S=\{(i,0),(0,i),(i,i): 1\leq i \leq n-1\}\subset G_n$. Define $\Gamma(n)$ to be the Cayley graph of $G_n$ with respect to the connecting set $S$. It is known that $\Gamma(n)$ is a strongly regular graph with the parameters $(n^2,3n-3,n,6)$ \cite{19}. Hence $\Gamma(n)$ is a distance regular graph.
	It is known that every distance transitive graph is distance regular, but the converse is not true. In this paper, we study some algebraic properties of the graph $\Gamma(n)$. Then by determining the automorphism group of this family of graphs, we show that the graphs under study are not distance transitive. 
\end{abstract}

\begin{keyword}
	strongly regular graph \sep distance transitive graph   \sep graph automorphism \sep clique
	\MSC[2008] 05C25 \sep 20B25 \sep 05E18
	
\end{keyword}
\end{frontmatter}

\section{Introduction and Preliminaries} 
In this paper, a graph $\Gamma=(V,E)$ is considered as an undirected simple graph where $V=V(\Gamma)$ is the vertex set and $E=E(\Gamma)$ is the edge set. For all the terminology and notation not defined here, we follow \cite{1,2,3,4}.

The group of all permutations of a set $V$ is denoted by $Sym(V)$  or just $Sym(n)$ when $|V| =n $. A $permutation$ $group$ $G$ on
$V$ is a subgroup of $Sym(V).$  In this case, we say that $G$ $acts$ on $V$. If $G$ acts on $V$, we say that $G$ is $transitive$ on $V$ (or $G$ acts $transitively$ on $V$) if given any two elements $u$ and $v$ of $V$, there is an element $ \beta $ of  $G$ such that  $\beta (u)= v
$.  If $\Gamma$ is a graph with vertex-set $V$, then we can view each automorphism of $\Gamma$ as a permutation on $V$  and so $Aut(\Gamma) = G$ is a permutation group on $V$.

A graph $\Gamma$ is called $vertex$ $transitive$ if  $Aut(\Gamma)$  acts transitively on $V(\Gamma)$. We say that $\Gamma$ is $edge$-$transitive$ if the group $Aut(\Gamma)$ acts transitively  on the edge set $E$, namely, for any $\{x, y\} ,   \{v, w\} \in E(\Gamma)$, there is some $\pi$ in $Aut(\Gamma)$,  such that $\pi(\{x, y\}) = \{v, w\}$.  We say that $\Gamma$ is $symmetric$ (or $arc$-$transitive$) if  for all vertices $u, v, x, y$ of $\Gamma$ such that $u$ and $v$ are adjacent, and also, $x$ and $y$ are adjacent, there is an automorphism $\pi$ in $Aut(\Gamma)$ such that $\pi(u)=x$ and $\pi(v)=y$. We say that $\Gamma$ is $distance$ $transitive$ if  for all vertices $u, v, x, y$ of $\Gamma$ such that $d(u, v)=d(x, y)$, where $d(u, v)$ denotes the distance between the vertices $u$ and $v$  in $\Gamma$,  there is an automorphism $\pi$ in $Aut(\Gamma)$ such that  $\pi(u)=x$ and $\pi(v)=y.$  The class of distance transitive graphs contains many  of interesting and important graphs. It is easy to see that the cycle $C_n$, the complete graphs $K_n$ and the complete bipartite graph $K_{n,n}$ are distance transitive. Some other interesting examples of distance transitive graphs are the Petersen graph, the crown graph \cite{1,7,13}, Johnson graphs \cite{2,11,12} and hypercube $Q_n$ \cite{1,2,6,15}. Distance-transitive graphs have been extensively studied by various authors. One may find many information about this family of graphs in  \cite{1,2,4,5}.

Let $\Gamma_i(x)$ denote  the set of vertices of $\Gamma$ at distance $i$ from the vertex $x$. Let $\Gamma=(V,E)$ be a simple connected graph with
diameter $D$. A $distance$ $regular$  graph $\Gamma=(V,E)$, with diameter $D$, is a regular connected graph of valency $k$ with the following property. There are positive integers
$$b_0 = k, b_1, ...,b_{D-1};c_1=1, c_2,...,c_D, $$
such that for each pair $(u, v)$  of vertices satisfying $u \in \Gamma_i(v)$, we have
\begin{enumerate}
	\item the number of vertices in $\Gamma_{i-1}(v)$ adjacent to $u$ is $c_i$, $1\leq i \leq D$.
	\item the number of vertices in $\Gamma_{i+1}(v)$ adjacent to $u$ is $b_i$, $0\leq i \leq D-1$.
\end{enumerate}
The intersection array of $\Gamma$ is $i(\Gamma) = \{k,b_1, ...,b_{D-1};  1,c_2, ...,c_d \}$.

It is easy to show that if $\Gamma$ is a distance transitive graph, then it is distance regular \cite{1,4}. For instance, the hypercube $Q_n, \ n>2$ is a  distance transitive,  and hence it is a distance regular graph with the intersection array 
$ \{n,n-1,n-2,...,1; 1,2,3,...,n  \}$ \cite{1}.

Let $\Gamma=(V,E)$ be a graph.
$\Gamma$ is said to be a $strongly$ $regular$ graph with parameters
$(n,k,\lambda,\mu)$, whenever $|V|=n$, $\Gamma$
is a regular graph of valency $k$, every pair of adjacent vertices of $\Gamma$ have $\lambda$ common neighbor(s), and every pair of non adjacent vertices of $\Gamma$ have $\mu$ common neighbor(s). It is clear that the diameter of every strongly regular graph is 2.  Well known examples of strongly regular graphs include the cycle $C_5$,  the Petersen graph and the complete bipartite graph $K_{n,n}$. Note that these graphs are also distance transitive. It is easy to show that if a graph $\Gamma$ is a distance regular graph of diameter 2 and order $n$, with intersection array $(b_0,b_1;c_1,c_2)$, then $\Gamma$ is a strongly regular graph with parameters
$(n,b_0,b_0-b_1-1,c_2).$ Also,  it is not hard  to check that if $\Gamma$ is a strongly regular graph with parameters $(n,k,\lambda,\mu)$, then $\Gamma$ is a distance regular graph with the intersection array $\{k,\lambda-\mu;1,\mu  \}$. 
There are many
papers that study distance regular  graphs and their applications from various points of  view \cite{2,21}.

Let $G$ be any abstract finite group with identity $1$, and
suppose $S$ is a subset of   $G$, with the
properties: $x\in S \Longrightarrow x^{-1} \in S$, and   $1\notin S$. The $Cayley\  graph$  $\Gamma=Cay (G; S)$ is the (simple)
graph whose vertex-set and edge-set  are defined as follows:
$$V(\Gamma) = G ,  E(\Gamma)=\{\{g,h\}\mid g^{-1}h\in S\}$$
It can be shown that the Cayley   graph   $\Gamma=Cay (G; S)$ is connected if and only if the set $S$ generates the group $G$ \cite{1}.

The group $G$ is called a semidirect product of $ N $ by $Q$,
denoted by $ G=N \rtimes Q $, if $G$ contains subgroups $ N $ and $ Q $ such that:  (i) $N \unlhd G $ ($N$ is a normal subgroup of $G$); (ii) $ NQ = G $; and
(iii) $N \cap Q =1 $. 

Almost all known classic families of strongly regular graphs with known automorphism groups are distance transitive. In this paper, we introduce an infinite family of strongly regular graphs $\Gamma(n)$ which are not distance transitive.

\begin{definition}
Let $G_n=\mathbb{Z}_n\times \mathbb{Z}_n$ for $n\geq 4$ and $S=\{(i,0),(0,i),(i,i): 1\leq i \leq n-1\}\subset G_n$. Define $\Gamma(n)$ to be the Cayley graph of $G_n$ with respect to the connecting set $S$.
\end{definition}

It is known that $\Gamma(n)$ is a strongly regular graph with the parameters $(n^2,3n-3,n,6)$ \cite{19}. In the next section, we determine the automorphism group of $\Gamma(n)$ and in the subsequent section, we study different types of transitivity of $\Gamma(n)$.

\section{Automorphism Group of $\Gamma(n)$}\label{Automorphism-Section}
Although in most situations  it is difficult  to determine the automorphism group of a graph, there are various papers in the literature dealing with automorphism groups, and some of the recent works include \cite{1/2-trans, generalized-andrasfai},\cite{7,9,10,11,12,14,15,16,17,18,22}.

Let $\mathcal{G}_n=\mathsf{Aut}(\Gamma(n))$. Consider the neighborhood $N_0$ of the vertex   $(0,0)$. It consists of three cliques, namely,  $C_1=\{(i,0):1\leq i \leq n-1\}$, $C_2=\{(0,i):1\leq i \leq n-1\}$ and $C_3=\{(i,i):1\leq i \leq n-1\}$, each of size $n-1$. (See Figure \ref{0nbd}.)
\begin{figure}[ht]
	\centering
	\begin{center}
		\includegraphics[scale=0.3]{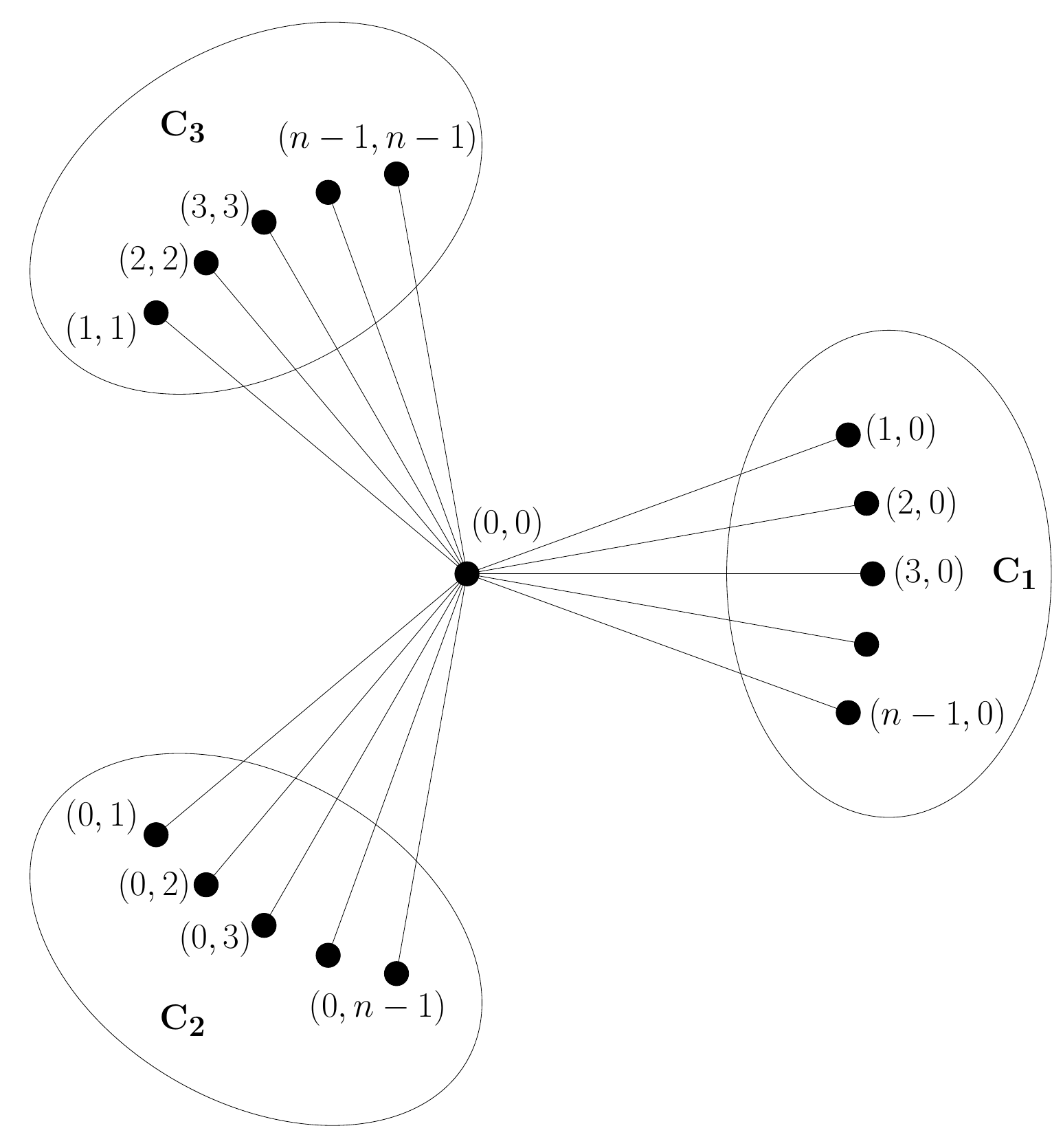}
		\caption{The neighbourhood of $(0,0)$}
		\label{0nbd}
	\end{center}
\end{figure}
Let $\mathcal{G}_0$ be the stabilizer subgroup of $\mathcal{G}_n$ which fixes $(0,0)$. If $f \in \mathcal{G}_0$, then $f(N_0)=N_0$. Let $H(n)$ be the subgraph induced by $N_0$ in the graph $\Gamma(n)$. Thus $g=f|_{N_0}$, the restriction of $f$ to $N_0$,  is an automorphism of the graph $H(n)$.  Since $C_i$ is a maximal clique in $H(n)$, if $v \in C_i$ and $f(v) \in C_j$, then $g(C_i)=C_j$, where   $i,j \in \{1,2,3\}$. Let $C=\{ C_1,C_2,C_3 \}$. We define the function,
\begin{equation}\label{phi-hom}
\varphi : \mathcal{G}_0 \rightarrow Sym(C), \varphi(f)=\varphi_f, \mbox{ where }    \varphi_f(C_i)=f(C_i).
\end{equation}  
It is easy to check that $\varphi$ is a group homomorphism. We now determine the kernel of $\varphi$. Note that if $f \in Ker(\varphi)$, then we have $f(C_i)=C_i$, $1 \leq i \leq 3$.

Let $u$ be a unit (invertible) element in the ring $\mathbb{Z}_n$. It is easy to see that the mapping $\psi_u$ defined on the vertex-set of $\Gamma(n)$ by the $\psi_u(i,j)=(ui,uj)$ is an automorphism of $\Gamma(n)$ such that $\psi_u \in \mathcal{G}_0 $ and is in fact in the kernel of $\varphi$. Let 
$K=\{\psi_u: u \in \mathbb{Z}^*_n\}\cong \mathbb{Z}^*_n$, where $\mathbb{Z}_n^*$ is the group of unit of the ring  $\mathbb{Z}_n$. Thus we have $K \leq Ker(\varphi)$. In the next Lemma \ref{key-lemma}, we show that $K=Ker(\varphi)$. Then we will have $|\frac{\mathcal{G}_0}{Ker(\phi)}| \leq |Sym(C)|$ and hence $|\mathcal{G}_0| \leq 6|K|$. 

\begin{lemma}\label{key-lemma}  Let $f \in \mathcal{G}_0$
	be such that $f \in Ker(\varphi)$, i.e., $f(C_1)=C_1$,  $f(C_2)=C_2$ and $f(C_3)=C_3$.  Then $f \in K=\{\psi_u: u \in \mathbb{Z}^*_n\}\cong \mathbb{Z}^*_n$.
\end{lemma}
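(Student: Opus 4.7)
The plan is to extract from $f$ a single permutation $\pi$ of $\{1,\dots,n-1\}$ that acts on the labels of all three cliques simultaneously, show that $\pi$ is $\mathbb{Z}_n$-linear, and then identify $f$ with $\psi_u$ for $u = \pi(1)$. Since $f$ fixes each $C_i$ setwise, I would write $f(i,0) = (\sigma(i),0)$, $f(0,i) = (0,\tau(i))$, and $f(i,i) = (\rho(i),\rho(i))$. Within $N_0$ the cross-clique edges are extremely restricted: a vertex $(i,0) \in C_1$ is adjacent to exactly one vertex of $C_3$, namely $(i,i)$ (since $(0,i) \in S$), and to exactly one vertex of $C_2$, namely $(0,-i)$ (since $(-i,-i) \in S$). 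Applying $f$ to these forced adjacencies pins down $\sigma(i) = \rho(i) = \tau(i)$, so I set $\pi := \sigma = \tau = \rho$; the $C_1$--$C_2$ edges then give the antisymmetry $\pi(-i) = -\pi(i)$.

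Next I would use vertices at distance $2$ from $(0,0)$ to force additivity of $\pi$. Each such vertex $(a,b)$ (with $a,b \ne 0$ and $a \ne b$) has exactly the six $N_0$-neighbors $(a,0),(a-b,0),(0,b),(0,b-a),(a,a),(b,b)$, in agreement with $\mu=6$. The two $C_3$-neighbors of $f(a,b)$ must be $(\pi(a),\pi(a))$ and $(\pi(b),\pi(b))$, so $f(a,b) \in \{(\pi(a),\pi(b)),(\pi(b),\pi(a))\}$. Matching the $C_1$-neighbors forces $f(a,b) = (\pi(a),\pi(b))$ together with $\pi(a-b) = \pi(a)-\pi(b)$, except possibly when $a=2b$. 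Applying this with $(a,b) = (k,1)$ for $k \neq 2$, and with $(a,b) = (n-1,1)$ to handle $k=2$, yields by induction $\pi(k) = uk$ where $u := \pi(1)$; bijectivity of $\pi$ forces $u \in \mathbb{Z}_n^*$. Hence $f$ and $\psi_u$ agree on $\{(0,0)\} \cup N_0$.

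To finish I would study $g := \psi_u^{-1} \circ f$, which is the identity on $\{(0,0)\} \cup N_0$. For any distance-$2$ vertex $v$, the image $g(v)$ shares with $v$ all six $N_0$-neighbors, and a direct inspection of the $C_1$-, $C_2$-, and $C_3$-contributions shows that this data uniquely determines $v$ except possibly in the ambiguous pair $\{(c,2c),(2c,c)\}$ with $3c \equiv 0 \pmod n$ (which only exists when $3 \mid n$). In that exceptional case, the strong regularity $\mu=6$ is decisive: the six common neighbors of $(c,2c)$ and $(2c,c)$ all lie in $N_0$, so none of the $3n-9$ external neighbors of $(c,2c)$ is adjacent to $(2c,c)$. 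Each such external neighbor $v$ lies outside the ambiguous pair, is uniquely pinned down by its $N_0$-neighbors, and is therefore fixed by $g$; but then preservation of the edge $v \sim (c,2c)$ would require $v \sim (2c,c)$, a contradiction. Hence $g$ cannot swap the pair, $g$ is the identity, and $f = \psi_u \in K$. The main obstacle I anticipate is precisely this last step: showing that the restriction of $f$ to $\{(0,0)\} \cup N_0$ determines $f$ globally, and in particular handling the ambiguity when $3 \mid n$ by the cascade argument just sketched.
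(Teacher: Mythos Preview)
Your argument is correct and shares its opening move with the paper: both extract a single permutation of $\{1,\dots,n-1\}$ from the action of $f$ on the three cliques (the paper calls it $i\mapsto c_i$, you call it $\pi$), using the triangles $(i,0)\sim(i,i)\sim(0,i)$. From there the two proofs diverge in mechanics. The paper first asserts $f(i,j)=(c_i,c_j)$ for \emph{all} distance-$2$ vertices (its Claim~1, argued via four specific neighbours and a figure), then uses the diagonal edges $(i,j)\sim(i+k,j+k)$ to force $c_{i+1}-c_i$ constant, and finally solves for $c_i=ci$ by a $\pm2$-determinant linear system when $n$ is odd and by a direct computation starting from $c_{n/2}=n/2$ when $n$ is even. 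You instead derive $\pi(k-1)=\pi(k)-\pi(1)$ directly from the $C_1$-neighbours of $(k,1)$ (for $k\neq2$, patching $k=2$ via antisymmetry and $(n-1,1)$), obtaining $\pi(k)=uk$ by a parity-free induction with no matrix; only afterwards do you extend to the remaining vertices by the common-neighbour cascade.

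What each approach buys: the paper's route gives the global formula $f(i,j)=(c_i,c_j)$ early, so once $c_i=ci$ is known the proof ends immediately, at the cost of the odd/even case split and a somewhat under-argued ``one can check'' in Claim~1 (which, as you noticed, is genuinely delicate when $3\mid n$). Your route avoids the case split and the linear algebra entirely, and your final paragraph is exactly the place where you pay for that economy: you must rule out the swap of the lone ambiguous pair $\{(c,2c),(2c,c)\}$ when $3\mid n$. Your use of $\mu=6$ here is clean---since the six $N_0$-neighbours already exhaust the common neighbours, any external neighbour of $(c,2c)$ is fixed by $g$ and non-adjacent to $(2c,c)$, which kills the swap. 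That step is sound (note $n\ge6$ here, so there are $3n-9\ge9$ external neighbours to work with, and $(2c,c)$ is not among them since $(-c,c)\notin S$). Overall your proof is a legitimate and slightly more uniform alternative to the paper's.
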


\begin{proof}Consider the path $P_i:(i,0)\sim (i,i)\sim (0,i)$ for $1\leq i \leq n-1$. Then we have $f(i,0)=(a_i,0),f(i,i)=(b_i,b_i)$ and $f(0,i)=(0,c_i)$ for some $1\leq a_i,b_i,c_i \leq n-1$. As $f$ maps $P_i$ to another path, by adjacency criterion, we have $a_i=b_i=c_i$, i.e., $$f(i,0)=(c_i,0),f(i,i)=(c_i,c_i),f(0,i)=(0,c_i) \mbox { for }1\leq i \leq n-1.$$
	{\it Claim 1: }$f(i,j)=(c_i,c_j)$ for $1\leq i,j \leq n-1$ and for $i\neq j$, $c_i\neq c_j$.\\
	{\it Proof of Claim 1:} Consider the following neighbourhood of $(i,j)$ (See Figure \ref{ijnbd}). It is mapped to the neighbourhood of some $(x,y)$ as shown in Figure \ref{ijnbd}. From the adjacency relations, we get $(x-c_i,y),(x-c_i,y-c_i),(x,y-c_j),(x-c_j,y-c_j) \in S$.
	
	One can check that the only possible value of $(x,y)$ is $(c_i,c_j)$. Hence Claim 1 holds.
	
	Thus $f(i,j)=(c_i,c_j)$ and for $i\neq j$, $c_i\neq c_j$ and none of $c_i$'s are $0$.

	\begin{figure}[ht]
		\centering
		\begin{center}
			\includegraphics[scale=0.4]{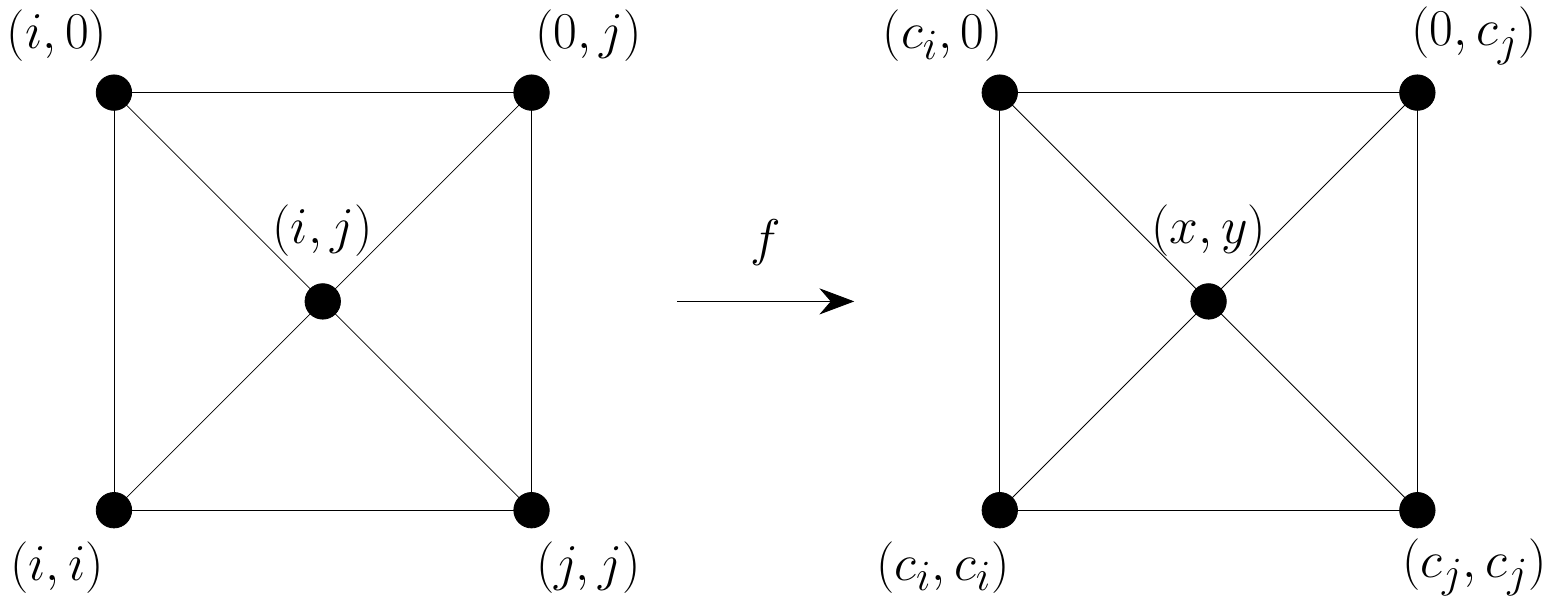}
			\caption{A part of neighbourhood of $(i,j)$ and its image under $f$}
			\label{ijnbd}
		\end{center}
	\end{figure}
	
	{\it Claim 2:} $c_{i+k}-c_i=c_{j+k}-c_j$ for all $i,j,k \in \{1,\ldots,n-1\}$.\\
	{\it Proof of Claim 2:} As $(i,j)\sim (i+k,j+k)$ for $k\neq 0$, we have $(c_i,c_j)\sim (c_{i+k},c_{j+k})$, i.e., $c_{i+k}-c_i=c_{j+k}-c_j$ for all $i,j,k \in \{1,\ldots,n-1\}$.
	
	{\it Claim 3:} If $i+j=n$, then $c_i+c_j\equiv 0~(mod~n)$.\\
	{\it Proof of Claim 3:} As $(i,0)\sim(0,n-i)$, we have $(c_i,0)\sim (0,c_{n-i})$, i.e., $c_i=n-c_{n-i}$, i.e., $c_i+c_{n-i}\equiv 0~(mod~n)$. Replacing $n-i$ by $j$, we get the claim.
	
	From Claim 2, we get 
	\begin{equation}\label{eq1}
	c_2-c_1=c_3-c_2=\cdots=c_{n-1}-c_{n-2}=c~(mod~n) \mbox{ (say)}.
	\end{equation}
	Also from Claim 3, we have 
	\begin{equation}\label{eq2}
	c_1+c_{n-1}=0~(mod~n).
	\end{equation}
	
	{\it Claim 4:} $gcd(c,n)=1$.\\
	{\it Proof of Claim 3:} If $gcd(c,n)>1$, then there exists a positive integer $d<n-1$ such that $dc\equiv 0~(mod~n)$. Thus from the first $d$ equations in Equation \ref{eq1}, we have $c_2-c_1=c_3-c_2=\cdots=c_{d+1}-c_{d}=c~(mod~n)$. Adding these $d$ congruences, we get $c_{d+1}-c_1\equiv dc\equiv 0~(mod~n)$, i.e., $c_1=c_{d+1}$, a contradiction to Claim 1. Hence the claim holds.
	
	{\bf Case 1: $n\geq 5$ is odd.}
	Writing Equations \ref{eq1} and \ref{eq2} in matrix form, we get
	$$\left[\begin{array}{cccccc}
	-1 & 1 & 0 & 0 & \hdots & 0\\
	0 & -1 & 1 & 0 & \hdots & 0\\
	0 & 0 & -1  & 1 & \hdots & 0\\
	\vdots & \vdots & \vdots & \ddots & \vdots & \vdots\\ 
	0 & 0 & 0 & \hdots & -1 & 1\\
	1 & 0 & 0 & \hdots & \hdots & 1\\
	\end{array}\right] 
	\left[\begin{array}{c}
	c_1\\
	c_2\\
	c_3\\
	\vdots\\
	c_{n-2}\\
	c_{n-1}
	\end{array}\right]=
	c\left[\begin{array}{c}
	1\\
	1\\
	1\\
	\vdots\\
	1\\
	0
	\end{array}\right]~(mod~n).$$
	Since the determinant of the square matrix on the left is $\pm 2$, it is non-singular modulo $n$ for all odd $n\geq 5$, the system has a unique solution. Now we note that $$\left[\begin{array}{c}
	c_1\\
	c_2\\
	c_3\\
	\vdots\\
	c_{n-2}\\
	c_{n-1}
	\end{array}\right]=
	c\left[\begin{array}{c}
	1\\
	2\\
	3\\
	\vdots\\
	n-2\\
	n-1
	\end{array}\right]~(mod~n) \mbox{ is a solution to the above system.}$$
	Thus we have $f(i,j)=(ci,cj)$ for some $c \in \mathbb{Z}^*_n$. Thus $f=\psi_c$ for some $c \in \mathbb{Z}^*_n$, i.e., $f \in K$.
	
	{\bf Case 2: $n\geq 4$ is even}.
	Let $n=2m$. We note that as $gcd(c,n)=1$, $c$ is odd. From Claim 4, putting $i=j=m$, we get $c_m=m$. Again from Equations \ref{eq1}, we get $c_{m+1}-c_m=c_m-c_{m+1}=c$, i.e., $c_{m+1}=m+c$ and $c_{m-1}=m-c$. Proceeding similarly and using the fact that $c$ is odd, we get 
	$$\begin{array}{l}
	c_1=m-(m-1)c=m(1-c)+c\equiv c~(mod~n)\\
	c_2=m-(m-2)c=m(1-c)+2c\equiv 2c~(mod~n)\\
	\vdots \\
	c_{m-1}=m-c\equiv (m-1)c~(mod~n)\\
	c_m=m\equiv mc~(mod~n)\\
	c_{m+1}=m+c\equiv (m+1)c~(mod~n)\\
	\vdots \\
	c_{n-1}=m+(m-1)c= m(1+c)-c\equiv(n-1)c~(mod~n)
	\end{array},$$ 
	i.e., $c_i=ci$, i.e., $f(i,j)=(ci,cj)$ for some $c \in \mathbb{Z}^*_n$. Thus $f=\psi_c$ for some $c \in \mathbb{Z}^*_n$, i.e., $f \in K$.
\end{proof}

We continue by noting the following automorphisms of $\Gamma(n)$: 
$$\begin{array}{ll}
\sigma: (i,j) \mapsto (j,i); & \\
\alpha: (i,j) \mapsto (-j,i-j); & \\
\end{array}$$
where all the operations are done modulo $n$. One can easily check that  $\sigma,\alpha \in \mathcal{G}_n=Aut(\Gamma_n)$. In fact $\alpha$ and $\sigma$ are  automorphisms of the group $G_n=\mathbb{Z}_n\times \mathbb{Z}_n$ which stabilize the connection set $S$. Also note that $\alpha(i,0)=(0,i-0)=(0,i)$, $\alpha(0,i)=(-i,0-i)=(-i,-i)$, $\alpha(i,i)=(i,i-i)=(i,0)$. Thus we have $\alpha(C_1)=C_2$, $\alpha(C_2)=C_3$, $\alpha(C_3)=C_1.$
 
Let   $L=\langle \alpha,\sigma: \circ(\sigma)=2;\circ(\alpha)=3; \sigma\alpha\sigma=\alpha^2 \rangle$.  Thus we have $ L \cong Sym(3) \cong Sym(C)$. \\
We now can deduce that every element $g$ of the subgroup $L$ is of the form $g=\alpha^i \sigma^j$, $i \in \{ 0,1,2 \}, \ j \in \{ 0,1 \}$. Clearly $K$ and $L$ are subgroups of $\mathcal{G}_0$ and it is not hard to check that  $K\cap L= \{  1\}$, where $1$ denotes the identity automorphism.
Moreover, one can check that the following relations hold:
$$\psi_u\circ \sigma=\sigma\circ \psi_u, \ \psi_u\circ \alpha=\alpha\circ \psi_u.$$
As elements of $K$ and $L$ commute with each other, $KL\cong K \times L$ is a subgroup of $\mathcal{G}_0$ of order $6|K|=6\varphi(n)$ ($\varphi(n)=|\mathbb{Z}_n^*|$). We now have the following result

\begin{corollary}\label{cor-stabilizer}
	$\mathcal{G}_0=KL\cong  K \times L$.
\end{corollary}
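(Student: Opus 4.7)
The plan is essentially a counting argument combined with the structural facts already assembled in the discussion immediately preceding the corollary.

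First, I would record what Lemma \ref{key-lemma} gives us group-theoretically. Since $K = \mathrm{Ker}(\varphi)$, the first isomorphism theorem yields $|\mathcal{G}_0|/|K| = |\mathcal{G}_0/\mathrm{Ker}(\varphi)| \leq |\mathrm{Sym}(C)| = 6$, so $|\mathcal{G}_0| \leq 6|K| = 6\varphi(n)$. This is the upper bound we will match.

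Next, I would produce the matching lower bound by exhibiting $6|K|$ distinct elements of $\mathcal{G}_0$ inside $KL$. The inputs are that $K, L \leq \mathcal{G}_0$, that $|L| = 6$ (since $L \cong \mathrm{Sym}(3)$ by its presentation), that $K \cap L = \{1\}$, and that every element of $K$ commutes with $\sigma$ and $\alpha$ and hence with every element of $L$. From $K \cap L = \{1\}$ the standard formula gives $|KL| = |K||L|/|K \cap L| = 6|K|$, so $KL \leq \mathcal{G}_0$ has order exactly $6|K|$, forcing $\mathcal{G}_0 = KL$.

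Finally, to upgrade $KL$ to an internal direct product $K \times L$, I would check the three standard criteria: both $K$ and $L$ are subgroups of $\mathcal{G}_0$, their intersection is trivial, and they centralize one another (so each is in particular normalized by the other, hence both are normal in $KL$). The commutation relations $\psi_u \circ \sigma = \sigma \circ \psi_u$ and $\psi_u \circ \alpha = \alpha \circ \psi_u$ already recorded in the preceding paragraph give elementwise commutativity, and then the map $(k,\ell) \mapsto k\ell$ is the desired isomorphism $K \times L \xrightarrow{\sim} KL = \mathcal{G}_0$.

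There is no real obstacle here: all the nontrivial content has been front-loaded into Lemma \ref{key-lemma} (identifying the kernel of $\varphi$) and into the explicit verification that $\sigma$ and $\alpha$ are automorphisms satisfying the Sym$(3)$ relations and commuting with each $\psi_u$. The corollary itself is a two-line bookkeeping statement: match the orders via $|\mathcal{G}_0| \leq 6|K| = |KL|$, then invoke the internal direct product criterion.
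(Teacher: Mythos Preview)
Your proposal is correct and follows essentially the same route as the paper: bound $|\mathcal{G}_0| \leq 6|K|$ via $\mathrm{Ker}(\varphi)=K$ and the first isomorphism theorem, then match this with the explicit subgroup $KL \cong K \times L$ of order $6|K|$ already constructed from the commutation relations and trivial intersection. The paper's proof is in fact terser than yours, relying on the sentence preceding the corollary for the direct-product structure of $KL$, whereas you spell out the internal direct product criterion explicitly.
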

\begin{proof}We know that $|\frac{\mathcal{G}_0}{Ker(\varphi)}| \leq |Sym(C)|$ and hence $|\mathcal{G}_0| \leq 6|Ker(\varphi)|$, where $\varphi$ is the group homomorphism defined in Equation \ref{phi-hom}. From Lemma \ref{key-lemma}, it follows that $Ker(\varphi)=K$ and hence we have $|\mathcal{G}_0| \leq 6|K|$. Moreover, we just deduced that $KL\cong K \times L$ is a subgroup of $\mathcal{G}_0$ of order $6|K|$. Thus we have $\mathcal{G}_0=KL\cong  K \times L$.
\end{proof}

From Corollary \ref{cor-stabilizer}, follows the following important result.

\begin{theorem}\label{aut-theorem}
	For $n\geq 4$, $Aut(\Gamma(n))\cong G_n \rtimes (K \times L)\cong  (\mathbb{Z}_n\times \mathbb{Z}_n )\rtimes (\mathbb{Z}_n^* \times Sym(3)).$
\end{theorem}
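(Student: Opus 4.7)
The plan is to combine the description of the point-stabilizer $\mathcal{G}_0$ given by Corollary \ref{cor-stabilizer} with the regular left-translation action of $G_n$ on itself. Recall that since $\Gamma(n)=\mathrm{Cay}(G_n,S)$ is a Cayley graph, each translation $T_g:(i,j)\mapsto(i,j)+g$ is an automorphism, and the subgroup $T=\{T_g:g\in G_n\}\cong G_n$ acts regularly on $V(\Gamma(n))$. In particular $T\cap\mathcal{G}_0=\{1\}$ (only $T_{(0,0)}$ fixes $(0,0)$), and orbit-stabilizer applied to the vertex-transitive action of $\mathcal{G}_n$ gives $|\mathcal{G}_n|=n^2|\mathcal{G}_0|=|T||\mathcal{G}_0|$. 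Hence $\mathcal{G}_n=T\cdot\mathcal{G}_0$ with trivial intersection.

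Second, I would establish that $T\trianglelefteq\mathcal{G}_n$. Because $\mathcal{G}_n=T\mathcal{G}_0$, it suffices to show that each $f\in\mathcal{G}_0$ normalizes $T$. By Corollary \ref{cor-stabilizer}, $\mathcal{G}_0=KL$ is generated by the maps $\psi_u$, $\sigma$, and $\alpha$, and direct inspection of their defining formulas shows that all three are $\mathbb{Z}$-linear maps on $\mathbb{Z}_n\times\mathbb{Z}_n$ — hence group automorphisms of $G_n$. For any group automorphism $f$ of $G_n$, the routine identity
\[
f\circ T_g\circ f^{-1}=T_{f(g)}
\]
holds, so $f T f^{-1}\subseteq T$. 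Therefore $\mathcal{G}_0$ normalizes $T$, and combined with the obvious fact that $T$ normalizes itself, we conclude $T\trianglelefteq \mathcal{G}_n$.

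Third, putting the two pieces together, the standard criterion for an internal semidirect product yields
\[
\mathcal{G}_n \;=\; T\rtimes\mathcal{G}_0 \;\cong\; G_n\rtimes(K\times L).
\]
Substituting the identifications $K\cong\mathbb{Z}_n^{*}$ and $L\cong Sym(3)$ recorded in Section \ref{Automorphism-Section} gives the stated isomorphism $Aut(\Gamma(n))\cong(\mathbb{Z}_n\times\mathbb{Z}_n)\rtimes(\mathbb{Z}_n^{*}\times Sym(3))$.

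The only non-automatic step is the normality of $T$; in general $T$ need not be normal in the automorphism group of a Cayley graph, and the conclusion would fail if the stabilizer contained any graph automorphism that was not also a group automorphism of $G_n$. Here this obstruction is avoided because Lemma \ref{key-lemma} forces every element of $\mathrm{Ker}(\varphi)$ to be an $\mathbb{Z}_n^{*}$-scalar map, while the coset representatives $\sigma,\alpha$ are visibly $\mathbb{Z}$-linear; so every element of $\mathcal{G}_0$ is affine-linear and the semidirect product structure follows cleanly.
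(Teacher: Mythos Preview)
Your argument is correct and follows essentially the same route as the paper: use orbit--stabilizer together with Corollary~\ref{cor-stabilizer} to get $|\mathcal{G}_n|=|T|\,|\mathcal{G}_0|$, observe that the generators $\psi_u,\sigma,\alpha$ of $\mathcal{G}_0$ are group automorphisms of $G_n$ and hence normalize the translation subgroup $T$, and conclude that $\mathcal{G}_n=T\rtimes\mathcal{G}_0$. The only cosmetic difference is that the paper first builds the subgroup $\langle T,\mathcal{G}_0\rangle\cong T\rtimes\mathcal{G}_0$ and then matches orders, whereas you first establish $\mathcal{G}_n=T\mathcal{G}_0$ and then verify normality; both are valid orderings of the same ingredients.
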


\begin{proof} As $\Gamma(n)$ is a Cayley graph, it is vertex transitive. Thus by Orbit-Stabilizer Theorem, we have $|V(\Gamma(n))|=|\frac{\mathcal{G}_n}{\mathcal{G}_0}|$, where $\mathcal{G}_n=Aut(\Gamma(n))$. Thus we have $|\mathcal{G}_n|=|G_n||\mathcal{G}_0|$. Hence by Corollary \ref{cor-stabilizer}, we have  $|\mathcal{G}_n|=|G_n||KL|$.  Note that the left regular representation of $G_n$ denoted by $L(G_n)=\{l_g \ | \ g \in G_n   \}$ is a subgroup of the automorphism group of the graph $\Gamma_n$ and  $L(G_n)  \cong G_n$ $(  l_g : G_n \rightarrow G_n, \  l_g(v)=g+v, \  v\in G_n ).$ Every element of  the subgroup $KL$ is an automorphism of the abelian group $G_n$ which normalize the subgroup $L(G_n)$ of $\mathcal{G}_n.$ In fact if $f\in KL$, then we have $(f^{-1}l_gf)(v)=f^{-1}(g+f(v))=f^{-1}(g)+v=l_{f^{-1}(g)}(v)$, for each $v \in G_n$. Thus $f^{-1}l_gf=l_{f^{-1}(g)} \in L(G_n)$, for every $g \in G_n.$ It is easy to see that $ \L(G_n) \cap KL=\{1 \}$. Hence we have,
	$$\langle L(G_n), KL \rangle \cong L(G_n) \rtimes KL$$
	is a subgroup of  $\mathcal{G}_n$ of order $|G_n||KL|$. Now we conclude that,
	$$Aut(\Gamma(n))=\mathcal{G}_n= L(G_n) \rtimes KL \cong G_n \rtimes (\mathbb{Z}_n^* \times Sym(3))\cong (\mathbb{Z}_n\times \mathbb{Z}_n )\rtimes (\mathbb{Z}_n^* \times Sym(3)) .$$
	
\end{proof}

\section{Transitivity of $\mathcal{G}_n$ on $\Gamma(n)$}
Since $\Gamma(n)$ is a Cayley graph on $\mathbb{Z}_n \times \mathbb{Z}_n$, it is vertex-transitive. In this section, we study the edge-transitivity, arc-transitivity, distance-transitivity of $\Gamma(n)$.

We recall that as $\Gamma(n)$ is a Cayley graph on $G_n$, $\{T_{a,b}:a,b \in \mathbb{Z}_n\}\cong \mathbb{Z}_n \times \mathbb{Z}_n$, i.e., the left regular representation of $G_n$ is a subgroup of $\mathcal{G}_n$, where $T_{a,b}: \mathbb{Z}_n \times \mathbb{Z}_n \rightarrow \mathbb{Z}_n \times \mathbb{Z}_n$ is given by $T_{a,b}(x,y)=(x+a,y+b)$ for $x,y \in \mathbb{Z}_n$.

\begin{theorem}
	If $n$ is composite, then $\Gamma(n)$ is not edge-transitive. 
\end{theorem}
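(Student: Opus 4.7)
The plan is to use the automorphism group structure from Theorem~\ref{aut-theorem} to reduce edge-transitivity to a single orbit computation. Since $\Gamma(n)$ is vertex-transitive, every edge is equivalent (under $L(G_n)$) to one of the form $\{(0,0), s\}$ with $s \in S$. Writing any $g \in \mathcal{G}_n$ as $g = T_v \circ h$ with $h \in \mathcal{G}_0$ and $T_v \in L(G_n)$, I would observe that $g(\{(0,0),(1,0)\}) = \{(0,0),s\}$ holds precisely when either $v=(0,0)$ and $h(1,0) = s$, or $v = s$ and $h(1,0) = -s$. Hence $\Gamma(n)$ is edge-transitive if and only if, for every $s \in S$, at least one of $s$ or $-s$ lies in the $\mathcal{G}_0$-orbit of $(1,0)$.

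The next step is to compute this orbit explicitly using $\mathcal{G}_0 = KL$ from Corollary~\ref{cor-stabilizer}. The subgroup $K = \{\psi_u : u \in \mathbb{Z}_n^*\}$ sends $(1,0)$ to $(u,0)$ for each $u \in \mathbb{Z}_n^*$, while the $L \cong Sym(3)$ factor cyclically permutes the three cliques $C_1,C_2,C_3$ through $\sigma$ and $\alpha$ (as established just before the corollary). Combining, the $\mathcal{G}_0$-orbit of $(1,0)$ is
$$\mathcal{O} \;=\; \bigl\{(u,0),\,(0,u),\,(u,u) : u \in \mathbb{Z}_n^*\bigr\}.$$
Since $u \mapsto n-u$ preserves $\mathbb{Z}_n^*$, the set $\mathcal{O}$ is closed under negation in $G_n$, so the edge-transitivity criterion from the previous paragraph collapses to the single condition $\mathcal{O} = S$.

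Finally, if $n$ is composite, pick any $d \in \{2,\dots,n-2\}$ with $\gcd(d,n) > 1$. Then $(d,0) \in S$, but $(d,0) \notin \mathcal{O}$ since $d \notin \mathbb{Z}_n^*$. By the criterion above, no automorphism of $\Gamma(n)$ carries the edge $\{(0,0),(1,0)\}$ to $\{(0,0),(d,0)\}$, so $\Gamma(n)$ is not edge-transitive.

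The only subtle step is the reduction in the first paragraph: one must verify that the two cases $v=(0,0)$ and $v=s$ really do exhaust all $g$ sending one edge to the other, and that the disjunction ``$s$ or $-s$ lies in $\mathcal{O}$'' genuinely simplifies to ``$s \in \mathcal{O}$'' via the closure of $\mathcal{O}$ under negation. Once this is in place, the orbit calculation and the composite-$n$ witness are completely routine.
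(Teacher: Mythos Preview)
Your proof is correct and rests on the same foundation as the paper's---the explicit description of $\mathcal{G}_n = L(G_n) \rtimes (K \times L)$ from Theorem~\ref{aut-theorem}---but it is packaged differently. The paper fixes the two specific edges $\{(0,0),(m,0)\}$ and $\{(0,0),(1,1)\}$ (with $m$ a proper divisor of $n$) and runs through all six possibilities $\alpha^i\sigma^j$ twice (once for each orientation) to show no automorphism carries one edge to the other, the obstruction in every case being that $m$ is not a unit. You instead compute the entire $\mathcal{G}_0$-orbit $\mathcal{O}$ of $(1,0)$ once, observe it is closed under negation, and reduce edge-transitivity to the clean criterion $\mathcal{O} = S$. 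This buys you a tidier argument with no repeated case analysis, and as a bonus it immediately yields the converse (edge-transitivity holds exactly when $n$ is prime), which the paper establishes separately in the next theorem. The paper's version is more hands-on and perhaps easier to verify line by line; yours is more conceptual and avoids redundancy.
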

\begin{proof}  
	Let $n$ be composite and $1<m<n$ be a factor of $n$. Consider the edges $\overrightarrow{e_1}=(0,0)\sim (m,0)$ and $\overrightarrow{e_2}=(0,0)\sim (1,1)$. We show that there does not exist any automorphism $f$ in $\mathcal{G}_n$ such that $f(\overrightarrow{e_1})=\overrightarrow{e_2}$ or $f(\overrightarrow{e_1})=\overleftarrow{e_2}$. 
	
	If possible, let $f(\overrightarrow{e_1})=\overrightarrow{e_2}$. Then $f \in \mathcal{G}_0$ and hence $f=\psi_c\circ \alpha^i\circ  \sigma^j$, for some $c \in \mathbb{Z}^*_n$, $i \in \{0,1,2\}$ and $j \in \{0,1\}$. As $\sigma(m,0)=\alpha(m,0)=(0,m)$ and $\alpha\sigma(m,0)=\alpha^2(m,0)=(-m,-m)$ and $\alpha^2\sigma(m,0)=(m,0)$, the only possibility of $f$ to get $f(m,0)=(1,1)$ is $f=\psi_{m^{-1}}\alpha\sigma$ or $f=\psi_{m^{-1}}\alpha^2$. However, as $m^{-1}$ does not exist in $\mathbb{Z}^*_n$, there does not exist any automorphism $f$ in $\mathcal{G}_n$ such that $f(\overrightarrow{e_1})=\overrightarrow{e_2}$.
	
	If possible, let $f(\overrightarrow{e_1})=\overleftarrow{e_2}$, i.e., $f(0,0)=(1,1)$ and $f(m,0)=(0,0)$. Let $f=T_{a,b}\psi_c\alpha^i\sigma^j$, for some $a,b \in \mathbb{Z}_n$, $c \in \mathbb{Z}^*_n$, $i \in \{0,1,2\}$ and $j \in \{0,1\}$. As $\psi_c\alpha^i\sigma^j(0,0)=(0,0)$, we have $a=b=1$. Thus $(0,0)=f(m,0)=T_{1,1}\psi_c\alpha^i\sigma^j(m,0)$, i.e., $\psi_c\alpha^i\sigma^j(m,0)=(-1,-1)$, i.e., $\alpha^i\sigma^j(m,0)=(-c^{-1},-c^{-1})$. As $\sigma(m,0)=\alpha(m,0)=(0,m)$ and $\alpha\sigma(m,0)=\alpha^2(m,0)=(-m,-m)$ and $\alpha^2\sigma(m,0)=(-m,0)$, the only possibilities are $(i,j)=(1,1)$ or $(2,0)$. However, as $m^{-1}$ does not exist in $\mathbb{Z}^*_n$, there does not exist any automorphism $f$ in $\mathcal{G}_n$ such that $f(\overrightarrow{e_1})=\overleftarrow{e_2}$. 
	
	Thus $\Gamma(n)$ is not edge-transitive. 
\end{proof}

\begin{theorem}
	If $n$ is prime, then $\Gamma(n)$ is arc-transitive.
\end{theorem}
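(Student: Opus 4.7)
The plan is to exploit the standard fact that a graph is arc-transitive if and only if it is vertex-transitive and its vertex stabilizer acts transitively on the neighborhood of the fixed vertex. Since $\Gamma(n)$ is a Cayley graph on $G_n$, it is vertex-transitive (the left regular representation acts transitively). So the whole task reduces to showing that the stabilizer $\mathcal{G}_0$ of $(0,0)$ acts transitively on the neighborhood $N_0 = C_1 \cup C_2 \cup C_3$.

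By Corollary \ref{cor-stabilizer}, we know $\mathcal{G}_0 = K \times L$ with $K \cong \mathbb{Z}_n^*$ and $L \cong \mathrm{Sym}(3)$. I would break the transitivity argument into two steps. First, using $L$: the automorphism $\alpha$ satisfies $\alpha(C_1)=C_2$, $\alpha(C_2)=C_3$, $\alpha(C_3)=C_1$, so $L$ acts transitively on the set $\{C_1,C_2,C_3\}$ of three cliques. Second, using $K$: inside a single clique, say $C_1$, the map $\psi_u$ sends $(i,0)$ to $(ui,0)$, so the orbit of $(1,0)$ under $K$ is $\{(u,0) : u \in \mathbb{Z}_n^*\}$. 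This is where the primality hypothesis enters crucially: when $n$ is prime, $\mathbb{Z}_n^* = \{1,2,\dots,n-1\}$, and therefore $K$ acts transitively on $C_1$ (and by symmetry on $C_2$ and $C_3$).

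Combining the two steps, given any $v_1,v_2 \in N_0$, first choose an element of $L$ moving the clique containing $v_1$ to the clique containing $v_2$, then use an element of $K$ to finish mapping the image of $v_1$ to $v_2$. Hence $\mathcal{G}_0$ is transitive on $N_0$, and together with vertex-transitivity this yields arc-transitivity of $\Gamma(n)$.

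There is essentially no hard step: the only subtle point is recognizing that the primality of $n$ is precisely what is needed to make $\mathbb{Z}_n^*$ act transitively on each clique, and that this is the exact feature missing in the composite case (where, as the previous theorem shows, even edge-transitivity fails). The reduction to showing neighborhood-transitivity of the stabilizer is a routine consequence of vertex-transitivity, and the clique-permutation action of $L$ has already been recorded when $\alpha$ was introduced.
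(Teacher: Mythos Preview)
Your argument is correct. Both you and the paper rely on the same ingredients---vertex-transitivity from the Cayley structure, the action of $L$ permuting the three cliques, and the action of $K\cong\mathbb{Z}_n^*$ within each clique, with primality guaranteeing that every nonzero residue is a unit---but the organization differs. The paper proceeds by explicit case analysis: it classifies edges at $(0,0)$ into three types and, for each ordered pair of types, writes down a specific automorphism (built from $\psi_c$, $\alpha$, $\sigma$, and a translation $T_{a,b}$) sending one arc to the other, including the reversed arc separately. Your route instead invokes the standard criterion that arc-transitivity is equivalent to vertex-transitivity together with transitivity of the stabilizer on the neighborhood, which absorbs the reversed-arc cases into the vertex-transitivity step and reduces the remaining work to the clean two-step ``move cliques with $L$, then move within a clique with $K$'' argument. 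Your version is shorter and more structural; the paper's version has the virtue of exhibiting the automorphisms explicitly.
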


\begin{proof}
	Since $\Gamma(n)$ is vertex-transitive, for arc-transitivity, it is enough to show that for any two edges $\overrightarrow{e_1}$ and $\overrightarrow{e_2}$ incident to $(0,0)$, there exist automorphisms $f_1,f_2\in \mathcal{G}_n$ such that $f_1(\overrightarrow{e_1})=\overrightarrow{e_2}$ and $f_2(\overrightarrow{e_1})=\overleftarrow{e_2}$.
	
	There are three types of edges incident to $(0,0)$: 
	\begin{itemize}
		\item Type-I: $(0,0)\sim(x,0)$ for some $1\leq x\leq n-1$.
		\item Type-II: $(0,0)\sim(0,x)$ for some $1\leq x\leq n-1$.
		\item Type-III: $(0,0)\sim(x,x)$ for some $1\leq x\leq n-1$.
	\end{itemize}
	
	If both $\overrightarrow{e_1}$ and $\overrightarrow{e_2}$ are of Type-I, with $\overrightarrow{e_1}=(0,0)\sim(x,0)$ and $\overrightarrow{e_2}=(0,0)\sim(y,0)$, then we have $\psi_{yx^{-1}}(\overrightarrow{e_1})=\overrightarrow{e_2}$ and $T_{y,0}\psi_{-xy^{-1}}(\overrightarrow{e_1})=\overleftarrow{e_2}$.
	
	If both $\overrightarrow{e_1}$ and $\overrightarrow{e_2}$ are of Type-II, with $\overrightarrow{e_1}=(0,0)\sim(0,x)$ and $\overrightarrow{e_2}=(0,0)\sim(0,y)$, then we have $\psi_{yx^{-1}}(\overrightarrow{e_1})=\overrightarrow{e_2}$ and $T_{0,y}\psi_{-yx^{-1}}(\overrightarrow{e_1})=\overleftarrow{e_2}$.
	
	If both $\overrightarrow{e_1}$ and $\overrightarrow{e_2}$ are of Type-II, with $\overrightarrow{e_1}=(0,0)\sim(x,x)$ and $\overrightarrow{e_2}=(0,0)\sim(y,y)$, then we have $\psi_{yx^{-1}}(\overrightarrow{e_1})=\overrightarrow{e_2}$ and $T_{y,y}\psi_{-yx^{-1}}(\overrightarrow{e_1})=\overleftarrow{e_2}$.
	
	If $\overrightarrow{e_1}$ is of Type-I and $\overrightarrow{e_2}$ is of Type-II, with $\overrightarrow{e_1}=(0,0)\sim(x,0)$ and $\overrightarrow{e_2}=(0,0)\sim(0,y)$, then we have $\psi_{yx^{-1}}\sigma(\overrightarrow{e_1})=\overrightarrow{e_2}$ and $T_{0,y}\psi_{-yx^{-1}}\sigma (\overrightarrow{e_1})=\overleftarrow{e_2}$.
	
	If $\overrightarrow{e_1}$ is of Type-I and $\overrightarrow{e_2}$ is of Type-III, with $\overrightarrow{e_1}=(0,0)\sim(x,0)$ and $\overrightarrow{e_2}=(0,0)\sim(y,y)$, then we have $\psi_{-yx^{-1}}\alpha\sigma(\overrightarrow{e_1})=\overrightarrow{e_2}$ and $T_{y,y}\psi_{yx^{-1}}\alpha\sigma (\overrightarrow{e_1})=\overleftarrow{e_2}$.
	
	If $\overrightarrow{e_1}$ is of Type-II and $\overrightarrow{e_2}$ is of Type-III, with $\overrightarrow{e_1}=(0,0)\sim(0,x)$ and $\overrightarrow{e_2}=(0,0)\sim(y,y)$, then we have $\psi_{-yx^{-1}}\alpha(\overrightarrow{e_1})=\overrightarrow{e_2}$ and $T_{y,y}\psi_{yx^{-1}}\alpha (\overrightarrow{e_1})=\overleftarrow{e_2}$.
	
	As $n$ is prime, $x^{-1}$ exists in modulo $n$ and hence $\Gamma(n)$ is arc-transitive.
\end{proof}

\begin{theorem}
	If $n\neq 5$, $\Gamma(n)$ is not distance-transitive. 
\end{theorem}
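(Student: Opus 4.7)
The plan is to split on the primality of $n$ and handle each case with a distinct tool. First, if $n$ is composite, the previous theorem already shows that $\Gamma(n)$ is not edge-transitive. Since distance-transitivity trivially implies transitivity on pairs of vertices at distance $1$, and such pairs are exactly the edges, every distance-transitive graph is edge-transitive. Consequently $\Gamma(n)$ fails to be distance-transitive whenever $n$ is composite, with no further work needed.

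For the prime case with $n \neq 5$, I would argue by an orbit-stabilizer count on the distance-$2$ sphere around $(0,0)$. Since $\Gamma(n)$ is strongly regular with parameters $(n^2, 3n-3, n, 6)$, its diameter is $2$, so the number of vertices at distance exactly $2$ from $(0,0)$ is
\[
n^2 - 1 - (3n-3) \;=\; (n-1)(n-2).
\]
Because $\Gamma(n)$ is a Cayley graph (hence vertex-transitive), distance-transitivity is equivalent to the stabilizer $\mathcal{G}_0$ acting transitively on each distance sphere. By Theorem \ref{aut-theorem}, $|\mathcal{G}_0| = 6\varphi(n)$, which equals $6(n-1)$ for prime $n$.

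By the orbit-stabilizer theorem, transitivity of $\mathcal{G}_0$ on the distance-$2$ sphere forces the divisibility
\[
(n-1)(n-2) \,\bigm|\, 6(n-1), \qquad \text{equivalently} \qquad (n-2)\,\bigm|\,6.
\]
Thus one needs $n-2 \in \{1,2,3,6\}$, i.e., $n \in \{3,4,5,8\}$. Intersecting with ``$n$ prime and $n \geq 4$'' leaves only $n = 5$. Hence for every admissible prime $n \neq 5$ (that is, every prime $n \geq 7$) the divisibility fails, $\mathcal{G}_0$ cannot act transitively on the $(n-1)(n-2)$ vertices at distance $2$ from $(0,0)$, and $\Gamma(n)$ is not distance-transitive.

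There is no serious obstacle here; the only delicate point is the bookkeeping in the case split, in particular making sure the exceptional value $n=5$ arises naturally. In that case $|\mathcal{G}_0| = 24$ and $(n-1)(n-2)=12$, so the divisibility test is inconclusive, which is consistent with $\Gamma(5)$ actually being distance-transitive (its parameters $(25,12,5,6)$ are those of the Paley graph on $25$ vertices), and this is precisely why $n=5$ must be excluded from the statement.
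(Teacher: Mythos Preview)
Your proof is correct, and for the prime case it takes a genuinely different route from the paper's. Both arguments dispose of composite $n$ the same way (via the failure of edge-transitivity). For prime $n\ge 7$, the paper selects two explicit vertices $(2,3)$ and $(4,2)$ at distance~$2$ from $(0,0)$ and runs through all six shapes $\psi_c\alpha^i\sigma^j$ of elements of $\mathcal{G}_0$, checking in each case that no choice of $c\in\mathbb{Z}_n^*$ sends $(2,3)$ to $(4,2)$. Your argument replaces this case analysis with a single orbit-stabilizer count: the distance-$2$ sphere has size $(n-1)(n-2)$ while $|\mathcal{G}_0|=6(n-1)$, so transitivity forces $(n-2)\mid 6$, leaving $n=5$ as the only admissible prime. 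Your approach is shorter and makes the exceptional role of $n=5$ transparent; the paper's explicit pair has the compensating virtue that the same two vertices (embedded in the $2$-arcs $P_1$ and $P_2$) are reused in the subsequent Proposition to show that $\Gamma(5)$ is not $2$-arc-transitive. Both arguments rest equally on the determination of $\mathcal{G}_0$ in Corollary~\ref{cor-stabilizer}.
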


\begin{proof}
	If $n$ is composite, as $\Gamma(n)$ is not edge-transitive, it is not distance transitive. So, we assume $n$ to be prime and $n\geq 7$. Consider the two paths $P_1:(0,0)\sim (0,1)\sim (2,3)$ and $P_2:(0,0)\sim(2,2)\sim(4,2)$. Thus both $(2,3)$ and $(4,2)$ are at distance two from $(0,0)$. 
	
	If $\Gamma(n)$ is distance-transitive, then there exists $f \in \mathcal{G}_n$ such that $f(0,0)=(0,0)$ and $f(2,3)=(4,2)$. As $(0,0)$ is fixed under $f$, $f$ must be of the form $\psi_c\alpha^i\sigma^j$ for some $c \in \mathbb{Z}^*_n$, $i \in \{0,1,2\}$ and $j \in \{0,1\}$.
	
	If $i=j=0$, then we must have $2c\equiv 4~(mod ~n)$ and $3c\equiv 2~(mod~n)$ for some $c \in \mathbb{Z}^*_n$. However existence of such a $c$ implies $n|4$, a contradiction. Thus $(i,j)\neq (0,0)$.
	
	If $(i,j)=(0,1)$, then we have $\psi_c\sigma(2,3)=(4,2)$, i.e., $(3c,2c)=(4,2)$, which has no solution for $c$. Thus $(i,j)\neq (0,1)$.
	
	If $(i,j)=(1,0)$, then we have $\psi_c\alpha(2,3)=(4,2)$, i.e., $(-3c,-c)=(4,2)$, which has no solution for $c$. Thus $(i,j)\neq (1,0)$.
	
	If $(i,j)=(2,0)$, then we have $\psi_c\alpha^2(2,3)=(4,2)$, i.e., $(c,-2c)=(4,2)$. This can happen only if $c=4$ and $n=5$. Thus $(i,j)\neq (2,0)$ if $n\neq 5$.
	
	If $(i,j)=(1,1)$, then we have $\psi_c\alpha\sigma(2,3)=(4,2)$, i.e., $(-2c,c)=(4,2)$, which has no solution for $c$. Thus $(i,j)\neq (1,1)$.
	
	If $(i,j)=(2,1)$, then we have $\psi_c\alpha^2\sigma(2,3)=(4,2)$, i.e., $(-c,-3c)=(4,2)$. This can happen only if $c=-4$ and $n=5$. Thus $(i,j)\neq (2,1)$ if $n\neq 5$.
	
	Hence the theorem holds.
\end{proof}

\begin{proposition}
	$\Gamma(5)$ is distance transitive, but not $2$-arc-transitive.
\end{proposition}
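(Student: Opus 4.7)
The plan is to establish distance-transitivity of $\Gamma(5)$ by reducing it to an orbit computation at the point stabilizer $\mathcal{G}_{0}$, and then to rule out $2$-arc-transitivity by observing that closed and open $2$-arcs both exist in $\Gamma(5)$. First, since $\Gamma(5)$ is strongly regular of diameter $2$, the distance classes from $(0,0)$ are the vertex itself, its $12$ neighbours, and the $12$ non-neighbours $V^{*}=\{(a,b):a,b\in\{1,2,3,4\},\ a\neq b\}$. Vertex-transitivity and arc-transitivity (the latter from the previous theorem, applicable since $5$ is prime) already give transitivity on ordered pairs at distances $0$ and $1$, so distance-transitivity reduces to showing that $\mathcal{G}_{0}$ acts transitively on $V^{*}$.

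For this transitivity I would compute the $\mathcal{G}_{0}$-orbit of $(1,2)$. Using $\alpha(i,j)=(-j,i-j)$ modulo $5$, one finds $\alpha(1,2)=(3,4)$ and $\alpha^{2}(1,2)=(1,4)$, and combining with $\sigma$ yields the $L$-orbit $\{(1,2),(2,1),(3,4),(4,3),(1,4),(4,1)\}$ of size $6$. Applying $\psi_{2}\in K$ to each of these produces the six further points $(2,4),(4,2),(1,3),(3,1),(2,3),(3,2)$. All twelve non-neighbours are thereby reached, so $\mathcal{G}_{0}$ is transitive on $V^{*}$ and $\Gamma(5)$ is distance-transitive.

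For the failure of $2$-arc-transitivity, I would use the strongly regular parameters $(25,12,5,6)$ directly. Since $\lambda=5>0$, there is a $2$-arc $(u,v,w)$ with $u\sim w$ (forming a triangle), and since $\mu=6>0$ one can also find non-adjacent $u$ and $w$ sharing a common neighbour, producing a $2$-arc with $u\not\sim w$. As graph automorphisms preserve adjacency, no element of $\mathcal{G}_{n}$ can interchange these two adjacency-types of $2$-arcs, so $\Gamma(5)$ is not $2$-arc-transitive. The main obstacle throughout is the orbit computation of the second paragraph: the formula for $\alpha$ is the only non-obvious piece, and a slip in the modular arithmetic would leave the orbit at size $6$ or $8$ and block the distance-transitivity conclusion; once the orbit is pinned down correctly, the rest follows cleanly from the structure of $\mathcal{G}_{0}=K\times L$ already worked out.
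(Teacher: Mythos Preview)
Your distance-transitivity argument is correct and follows the same line as the paper: reduce to showing that $\mathcal{G}_{0}=K\times L$ is transitive on the set $V^{*}$ of twelve non-neighbours of $(0,0)$. You carry out the orbit computation explicitly, whereas the paper simply asserts the transitivity; your modular arithmetic checks out.

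For the failure of $2$-arc-transitivity you take a genuinely different route. The paper exhibits two specific $2$-arcs $P_{1}:(0,0)\sim(0,1)\sim(2,3)$ and $P_{2}:(0,0)\sim(2,2)\sim(4,2)$, both with \emph{non-adjacent} endpoints, and argues by case analysis over $\mathcal{G}_{0}$ (mirroring the proof of the previous theorem) that no automorphism carries $P_{1}$ to $P_{2}$. Your argument instead notes that $\lambda>0$ forces some $2$-arc with adjacent endpoints and $\mu>0$ (together with diameter $2$) forces some $2$-arc with non-adjacent endpoints, and that adjacency of the endpoints is an automorphism invariant. Your approach is shorter and entirely structural, requiring no knowledge of $\mathcal{G}_{0}$; the paper's computation, on the other hand, establishes the slightly stronger fact that even among $2$-arcs of the same adjacency type (both open) there are distinct $\mathcal{G}_{5}$-orbits. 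Both arguments are correct.
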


\begin{proof}
	Though it can be checked easily in SageMath  computations [20], for the sake of completeness, we provide a sketch of the proof: Since $\Gamma(5)$ is vertex-transitive and of diameter $2$, for distance-transitivity, it is enough to show that for any two vertices $(x,y)$ and $(u,v)$ which are not adjacent to $(0,0)$, there exists an automorphism $f \in \mathcal{G}_5$ such that $f(0,0)=(0,0)$ and $f(x,y)=(u,v)$.
	
	As $n=5$ and $(x,y)$ and $(u,v)$ are not adjacent to $(0,0)$, both $(x,y)$ and $(u,v)$ must belong to the set 
	$$N= \{(1,2),(1,3),(1,4),(2,1),(2,3),(2,4),(3,1),(3,2),(3,4),(4,1),(4,2),(4,3)\}.$$
	
	As $K\times L$ acts transitively on this set $N$, we can always find such an $f$. Hence $\Gamma(5)$ is distance transitive.
	
	To show that $\Gamma(5)$ is not $2$-arc-transitive, consider the two $2$-arcs $P_1:(0,0)\sim (0,1)\sim (2,3)$ and $P_2:(0,0)\sim(2,2)\sim(4,2)$ respectively. Following the arguments as in previous theorem, it can be shown that there does not exist any automorphism which maps one of the $2$-arcs to the other. 
\end{proof}

\section*{Acknowledgement}
The first author acknowledge the funding of DST-SERB-MATRICS Sanction no.\\ $MTR/2022/000020$, Govt. of India.

\section*{References}


\begin{thebibliography}{}
	
	\bibitem{1} N. L.  Biggs,   Algebraic Graph Theory 1993 (Second edition),   Cambridge Mathematical Library
	(Cambridge University Press; Cambridge).
	
	\bibitem{1/2-trans} S. Biswas and A. Das, A Family of Tetravalent Half-transitive Graphs, Proceedings - Mathematical Sciences, 131, 28 (2021). 
	
	\bibitem{generalized-andrasfai} S. Biswas, A. Das and M. Saha, Generalized Andrasfai Graphs, Discussiones Mathematicae - General Algebra and Applications, Vol. 42, Issue 2, 449--462, 2022.
	
	\bibitem{2} A. E.  Brouwer,  A. M.  Cohen,  A. Neumaier, Distance-Regular Graphs, Springer-Verlag, New York, (1989)
	
	
	
	\bibitem{3}   J. D.   Dixon,  B.  Mortimer,   Permutation Groups,  Graduate Texts in Mathematics 1996; 163:   Springer-Verlag, New
	York.
	
	
	
	\bibitem{4} C.   Godsil,   G.  Royle,   Algebraic Graph Theory,  New York, 2001,  Springer.
	
	\bibitem{5} C Godsil , R. A.  Liebler, C. E. Praeger,  Antipodal distance transitive covers of complete graphs,  European Journal of Combinatorics,  (1998),   19(4):455-78.
	
	
	
	
	\bibitem{6} S. M.  Mirafzal,    Some other algebraic properties of folded hypercubes,  Ars Comb. 124, 153-159 (2016).
	
	
	
	\bibitem{7} S. M.  Mirafzal,  The automorphism group of the bipartite Kneser graph, Proceedings-Mathematical Sciences, (2019), doi.org/10.1007/s12044-019-0477-9.
	
	\bibitem{8}S. M.  Mirafzal,  M.  Ziaee,  Some algebraic aspects  of enhanced Johnson graphs, Acta Math. Univ. Comenianae, 88(2) (2019), 257-266.
	
	\bibitem{9} S. M.  Mirafzal,   Cayley properties of the line graphs induced by consecutive layers of the hypercube,
	Bull. Malaysian Math. Sci. (2020), https://doi.org/10.1007/s40840-020-01009-3.
	
	\bibitem{10} S. M.  Mirafzal,   On the automorphism groups of connected bipartite irreducible graphs. Proc. Math.
	Sci. (2020). https://doi.org/10.1007/s12044-020-0589-1.
	
	\bibitem{11} S. M.  Mirafzal,   A note on  the automorphism groups of  Johnson graphs, Ars Comb. 154  (2021),   245-255 (Available from: arXiv: 1702.02568v4).
	
	\bibitem{12} S. M.  Mirafzal,  M.   Ziaee,
	A note on the automorphism group  of the Hamming  graph, Transactions on Combinatorics, Vol. 10 No. 2 (2021), pp. 129-136.
	
	\bibitem{13} S. M.   Mirafzal, The line graph of the crown graph is distance integral, Linear
	and Multilinear Algebra.  https://doi.org/10.1080/03081087.2022.2040941, (2022).
	
	
	\bibitem{14}S. M.  Mirafzal,   The automorphism group of the Andr\'{a}sfai  graph, Discrete Math. Lett. 10 (2022) 60-63.
	
	\bibitem{15} S. M.  Mirafzal,  Some remarks on the square graph of the hypercube, Ars Mathematica Contemporanea, 23(2),  (2023).
	
	\bibitem{16} S. M.  Mirafzal,  On the automorphism groups of us-Cayley graphs, The Art of Discrete and Applied Mathematics, DOI: https://doi.org/10.26493/2590-9770.1624.a3d, (2023).  
	
	\bibitem{17} S. M.  Mirafzal,  Some algebraic properties of the subdivision graph of a graph,  Communications in Combinatorics and Optimization,  (2023).
	
	\bibitem{18} D Marusic,    Bicirculants via Imprimitivity Block Systems,  Mediterranean Journal of Mathematics,  18 (2021): 1-15.
	
	\bibitem{19} B. Nica, A Brief Introduction to Spectral Graph Theory, EMS Publishing House, Zuerich, 2018.
	
	\bibitem{20} W. Stein and others,  Sage Mathematics Software (Version 7.3), Release Date:
	04.08.2016, http://www.sagemath.org.
	
	\bibitem{21} E. R.  van Dam, J. H. Koolen, H. Tanaka,  Distance-regular graphs,  arXiv preprint arXiv:1410.6294. 2014 Oct 23.
	
	\bibitem{22}J. X. Zhou,  J. H. Kwak,   Y. Q. Feng,  Z.L. Wu,   Automorphism group of the balanced hypercube, Ars Math. Contemp 12  (2017),   145-154.  
	
	
	
	
	
	
\end{thebibliography}
\end{document}